\documentclass[12pt]{article}
\usepackage[latin1]{inputenc}
\usepackage{amsfonts}
\usepackage{amsmath}
\usepackage{amssymb,amscd,latexsym,amsthm}
\usepackage{enumerate}
\usepackage[dvips]{graphicx}
\setlength{\topmargin}{-0.8 in} \setlength{\textwidth}{17.1cm}
\setlength{\textheight}{25.5cm} \setlength{\oddsidemargin}{-0.2cm}
\setlength{\evensidemargin}{-0.2cm}

\newtheorem{lemma}{Lemma}
\newtheorem{theorem}{Theorem}

\newtheorem{corollary}{Corollary}
\newtheorem{definition}{Definition}

\title{\vspace{-0.in}\parbox{\linewidth }{\footnotesize\noindent
} \\  \bf Symmetry, bifurcation and stacking of the central configurations of the planar $1+4$ body problem}

\author{
\\ Allyson Oliveira \\
\small {N\'ucleo de Forma\c c\~ao Docente, Universidade Federal de Pernambuco}
\\ \small{Caruaru-PE, CEP. 55002-970, Brazil e-mail: allyson.oliveira@ufpe.br }
}

\date{ }

\begin{document}
\maketitle

\author{ \ }

\begin{abstract}
In this work we are interested in the central configurations of the planar $1+4$ body problem where the satellites have different infinitesimal masses and two of them are diametrically opposite in a circle. We can think this problem as a stacked central configuration too. We show that the configuration are necessarily symmetric and the other sattelites has the same mass. Moreover we proved that the number of central configuration in this case is in general one, two or three and in the special case where the satellites diametrically opposite have the same mass we proved that the number of central configuration is one or two saying the exact value of the ratio of the masses that provides this bifurcation.
\end{abstract}


\maketitle

\section{Introduction}

In the N-body problem the central configurations are the configurations such that the total Newtonian acceleration of every body is equal to a constant multiplied by the position vector of this body with respect to the center of mass of the configuration. One of the reasons why central configurations are interesting is that they allow us to obtain explicit homographic solutions of the N-body problem i. e. motion where the configuration of the system changes its size but keeps its shape. They also arise as the limiting configuration of a total collapse. The papers dealing central configuration has focus in several points as finding example of particular central configuaration, giving the number of central configuration, its symmetry, stability, stacking properties, i. e. central configuration that contains others central configuration by dropping some bodies, etc. We can cite several references to reader about this theme; see e.g. \cite{hagihara},\cite{saari}, \cite{wintner}, \cite{saari2}.

This work deals with central configurations of the planar $5$-body problem, in the case where we have one dominant mass and $4$ infinitesimal masses, called satellites, on a plane. The planar $1+n$ body problem was treated by Maxwell \cite{maxwell} trying to construct a model for Saturn's rings. Many other contribution there are in the Literature for this problem. Considering satellites with equal masses, Casasayas, Llibre and Nunes \cite{llibre1} improving a previous result of Glen Hall \cite{hall} proving that the regular polygon is the only central configuaration if $n \geq e^{73}$. Cors, LLibre and Oll\'e \cite{llibre2} obtain numerically evidences that there is only one central configuration if $n\geq 9$ and that every central configuration is symmetric with respect to a straight line. Moreover they proved that there are only 3 symmetric central configurations of the $1+4$ body problem. Albouy and Fu \cite{albouy} proved that all central configurations of four identical satellites are symmetric which settles the question in the case $1+4$.

In the recent work \cite{oliveira}, Oliveira and Cabral worked with stacked planar central configuration of the $1+n$ body  problem in two cases: adding one satellite in a central configuaration with different satellites and adding two satellites considering equal all infinitasimal masses. Renner and Sicardy \cite{sicardy} obtained results about giving a configuration of the coorbital satellites, find the infinitesimal masses making it a central configuration. They also studied the linear stability of this configuaration. Corbera, Cors and Llibre \cite{llibre3} considering the $1+3$ body problem, found two different classes exhibiting symmetric and nonsymmetric configurations. And when two infinitesimal masses are equal, they provide evidence that the number of central configurations varies from five to seven. 

In this work we are interested in the central configurations of the planar $1+4$ body problem where the satellites have diferent masses. The satellites lie on a circle centered at the big mass and we treated the cse where two of them are diametrically opposite and nonconsecutive in the circle: for example we can suppose the big mass, the first satellite and the third one are collinear. The reader can observe that this problem is a stacked central configuaration too since the collinear $1+2$ configuration is a central one.

\section{Preliminaries}\label{preliminaries}

We develop in this section the formulation of this well known problem. Basically it references our previous work \cite{oliveira}. Consider $N$ pontual masses, $m_1,...,m_N$, in $\mathbb{R}^2$ subject to their mutual Newtonian gravitational interaction. Let $M=diag\{m_1,m_1,...,m_N,m_N\}$ be the matrix of masses and let $q=(q_1,...,q_N), q_i\in \mathbb{R}^2$ be the position vector. The equations of motion in an inertial reference frame with origin at the center of mass are given by 
$$M\ddot{q} = \frac{\partial V}{\partial q},$$
where $V(q_1,...,q_N)=\displaystyle\sum_{1\leq i<j\leq N}\frac{m_im_j}{\|q_i-q_j\|}$ is the Newtonian potential.

A non-collision configuration $q=(q_1,...,q_N)$ with $\sum_{i=1}^Nm_iq_i =0$ is a {\it central configuration} if there exists a positive constant $\lambda$ such that
$$M^{-1}V_q = \lambda q.$$

We consider the planar $N=1+n$ body problem, where the big mass is equal to 1 with position $q_0=0$. The remaining $n$ bodies with positions $q_i$, called satellites, have masses $m_i =\mu_i\epsilon, i=1,..,n$, where $\mu_i\in \mathbb{R}^{+}$ and $\epsilon > 0$ is a small parameter that tends to zero.  So we say that $(q_1,...,q_n)$ is a planar central configuaration of $1+n$ bodies if 
$$\displaystyle \lim_{\epsilon \rightarrow 0}(q_1(\epsilon),q_2(\epsilon),...,q_n(\epsilon))=(q_1,...,q_n).$$

In all central configuration of the planar $1+n$ body problem the satellites lie
on a circle centered at the big mass (\cite{llibre1,hall}), i.e. the satellites are coorbital. Since we are interested in central configuration modulus rotations and
homothetic transformations, we can assume that the circle has radius 1 and
that $q_1=(1,0)$.

We exclude collisions in the definition of central configuration and take as coordinates the angles $\theta_i$ between
two consecutive particles. See e.g. \cite{llibre1} for details. In this coordinates the space of configuration is the simplex $$\Delta = \{\theta = (\theta_1,...,\theta_n); \sum_{i=1}^n\theta_i = 2\pi, \theta_i>0, i=1,..,n\}$$ and the equations characterizing the central configurations of the planar 1+n body problem are
\begin{eqnarray}
&&\mu_2f(\theta_1)+\mu_3f(\theta_1+\theta_2)+...+\mu_nf(\theta_1+\theta_2+...+\theta_{n-1})=0, \nonumber \\
&&\mu_3f(\theta_2)+\mu_4f(\theta_2+\theta_3)+...+\mu_1f(\theta_2+\theta_3+...+\theta_{n})=0, \nonumber \\
&&\mu_4f(\theta_3)+\mu_5f(\theta_3+\theta_4)+...+\mu_2f(\theta_3+\theta_4+...+\theta_{n}+\theta_1)=0,\nonumber\\
&&...\label{sistemageral}\\
&&\mu_nf(\theta_{n-1})+...+\mu_{n-2}f(\theta_{n-1}+\theta_n+\theta_1+...+\theta_{n-3})=0, \nonumber\\
&&\mu_1f(\theta_{n})+\mu_2f(\theta_{n}+\theta_{1})+...+\mu_{n-1}f(\theta_{n}+\theta_1+\theta_2+...+\theta_{n-2})=0, \nonumber\\
&&\theta_1+...+\theta_n=2\pi,\nonumber
\end{eqnarray}
where $f(x) = \displaystyle \sin(x)\left(1 - \frac{1}{8|\sin^3(x/2)|} \right).$
\\

\begin{definition}
We say that a solution $(\theta_1,...,\theta_2)$ of the system (\ref{sistemageral}) is a central configuration of the planar $1+n$ body problem associated to the masses $\mu_1,...,\mu_n$.
\end{definition} 

The following results exhibit the main properties of the function $f$. Their prove can be found in \cite{albouy}.

\begin{lemma}\label{lema1}
 The function
 $$f(x)=\sin(x)\left(1 - \frac{1}{8|\sin^3(x/2)|} \right), \ \ x\in (0,2\pi)$$
satisfies:
\begin{enumerate}[i)]
\item $f(\pi/3)=f(\pi)=f(5\pi/3)=0;$
 \item $f(\pi-x)=-f(\pi+x), \forall x \in (0,\pi);$
 \item $\displaystyle f'(x)=\cos(x)+\frac{3+\cos(x)}{16|\sin^3(x/2)|}\geq f'(\pi)=-7/8$, for all $x\in(0,2\pi);$
 \item $f'''(x)> 0$, for all $x\in(0,2\pi);$
\item In $(0,\pi)$ there is a unique critical point $\theta_c$ of $f$ such that $\theta_c>3\pi/5$,  $f'(\theta)>0$ in $(0,\theta_c)$ and $f'(\theta)<0$ in $(\theta_c,\pi).$
\end{enumerate}
\end{lemma}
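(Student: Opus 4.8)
The plan is to treat the five items in a logical rather than numerical order, since \emph{iv)} underlies both \emph{iii)} and \emph{v)}. Items \emph{i)} and \emph{ii)} are direct: for \emph{i)} one simply evaluates $f$, noting that $\sin\pi=0$ annihilates the whole expression at $x=\pi$, while at $x=\pi/3$ and $x=5\pi/3$ one has $|\sin(x/2)|=1/2$, so the bracket $1-\frac{1}{8|\sin^3(x/2)|}$ vanishes. For \emph{ii)}, the identities $\sin(\pi\mp x)=\pm\sin x$ and $|\sin((\pi\mp x)/2)|=|\cos(x/2)|$ turn both $f(\pi-x)$ and $-f(\pi+x)$ into the common expression $\sin x\bigl(1-\tfrac{1}{8|\cos^3(x/2)|}\bigr)$.

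Before the inequalities I would fix the derivative formula. On $(0,2\pi)$ we have $\sin(x/2)>0$, so the absolute value can be dropped and, writing $u=x/2$ and using $\sin x=2\sin u\cos u$, the function becomes $f=\sin x-\frac{\cos u}{4\sin^2 u}$. Differentiating the second term (chain rule in $u$) and rewriting with $3+\cos x=2(1+\cos^2 u)$ reproduces the stated $f'(x)=\cos x+\frac{3+\cos x}{16\sin^3(x/2)}$. Iterating this differentiation twice more is the computational heart of the lemma. The key simplification I would exploit is that, after collecting terms, $f'''$ depends on $x$ only through $s=\sin(x/2)$: one finds
\[
f'''(x)=-\cos x+\frac{\cos^4 u+18\cos^2 u+5}{32\sin^5 u},
\]
and since $\cos x=1-2s^2$ and $\cos^2 u=1-s^2$, multiplying by the positive quantity $32\sin^5 u$ reduces the claim $f'''>0$ to the purely polynomial inequality
\[
P(s)=64s^7-32s^5+s^4-20s^2+24>0,\qquad s\in(0,1].
\]
This I would establish by bounding the two pieces separately: $s^4-20s^2+24\geq 5$ on $[0,1]$ (monotone in $s^2$), while the only negative contribution $64s^7-32s^5=32s^5(2s^2-1)$ is bounded below by roughly $-0.7$, comfortably dominated by $5$. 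This settles \emph{iv)}.

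With $f'''>0$ in hand, \emph{iii)} and \emph{v)} become soft. The odd symmetry of \emph{ii)} about $x=\pi$ gives that $f'(\pi+t)=f'(\pi-t)$ is even in $t$ and that $f''$ is odd about $\pi$, so $f''(\pi)=0$; since $f'$ is strictly convex (by \emph{iv)}), $x=\pi$ is its unique critical point and hence its global minimum, giving $f'(x)\geq f'(\pi)=-7/8$, which is \emph{iii)}. For \emph{v)}, on $(0,\pi)$ the fact that $f''$ is increasing with $f''(\pi)=0$ forces $f''<0$, so $f'$ is strictly decreasing there; combined with $f'(0^+)=+\infty$ (the $\sin^{-3}(x/2)$ blow-up) and $f'(\pi)=-7/8<0$, this yields a unique zero $\theta_c$ of $f'$ with $f'>0$ before and $f'<0$ after. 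Finally, a direct evaluation showing $f'(3\pi/5)>0$ (the value is small but positive), together with the monotonicity of $f'$, places $\theta_c>3\pi/5$.

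The main obstacle is clearly \emph{iv)}: the triple differentiation is lengthy and error-prone, and the real content is finding the clean form that depends only on $s=\sin(x/2)$ and then certifying the polynomial inequality $P(s)>0$. Everything else follows from symmetry, convexity, and boundary behaviour.
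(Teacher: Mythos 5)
Your proof is correct, but it cannot be compared line-by-line with the paper's, because the paper contains no proof of this lemma: it explicitly defers to Albouy and Fu \cite{albouy} (``Their prove can be found in...''). So your self-contained argument is necessarily a different route, and a useful one. Checking it: items \emph{i)} and \emph{ii)} are exactly the direct evaluations and reflection identities you state; your rewriting $f=\sin x-\frac{\cos u}{4\sin^2 u}$ with $u=x/2$ does reproduce the stated $f'$, and your third-derivative formula $f'''(x)=-\cos x+\frac{\cos^4 u+18\cos^2 u+5}{32\sin^5 u}$ is correct; with $\cos x=1-2s^2$ and $\cos^2u=1-s^2$ one gets exactly $32s^5f'''(x)=P(s)=64s^7-32s^5+s^4-20s^2+24$, so \emph{iv)} is indeed equivalent to $P(s)>0$ on $(0,1]$. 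The deduction of \emph{iii)} and \emph{v)} from the evenness of $f'$ about $\pi$ (hence $f''(\pi)=0$) together with $f'''>0$ is clean and valid, and $f'(3\pi/5)>0$ does hold. Two spots need tightening to be fully rigorous. First, the negative term in $P$: the crude bound $32s^5|2s^2-1|\leq 32(1/\sqrt{2})^5\approx 5.66$ would \emph{not} be beaten by your lower bound $5$ on $s^4-20s^2+24$, so you must actually minimize $64s^7-32s^5$; its derivative $32s^4(14s^2-5)$ vanishes at $s^2=5/14$, giving the minimum $-\frac{64}{7}(5/14)^{5/2}\approx -0.697$, which is the ``roughly $-0.7$'' you assert --- include that computation, since without it the domination is not evident. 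Second, $f'(3\pi/5)\approx 0.0086$ is positive by a very small margin, so a purely numerical ``direct evaluation'' is fragile; better to use the exact values $\cos(3\pi/5)=-(\sqrt{5}-1)/4$ and $\sin(3\pi/10)=(\sqrt{5}+1)/4$, which turn $f'(3\pi/5)>0$ into an exact algebraic verification. With those two details made explicit, your proof is complete and has the merit of making the paper self-contained where it currently relies on an external reference.
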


\begin{lemma}\label{lema2albouy}
Consider four points $t_1^L,t_1^R,t_2^L,t_2^R$ such that $0<t_1^L<t_2^L<\theta_c<t_2^R<t_1^R<2\pi, f(t_1^L)=f(t_1^R)=f_1$ and $f(t_2^L)=f(t_2^R)=f_2$. Then
$t_2^L+t_2^R<t_1^L+t_1^R.$
\end{lemma}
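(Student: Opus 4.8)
The plan is to restate the conclusion in the equivalent form $t_2^L - t_1^L < t_1^R - t_2^R$ and to read both sides as the horizontal displacement of the graph of $f$ while its height runs through the common interval $[f_1,f_2]$, once on the left of $\theta_c$ and once on the right. By Lemma \ref{lema1}(v), $f$ is strictly increasing on $(0,\theta_c)$; moreover differentiating relation (ii) gives $f'(\pi+x)=f'(\pi-x)$, so $f'$ is symmetric about $\pi$, its only critical points are $\theta_c$ and $2\pi-\theta_c$, and $f$ is strictly decreasing on the arc $(\theta_c,2\pi-\theta_c)$. In the principal case, where $t_1^R,t_2^R$ both lie on this decreasing arc, I would denote by $\ell(s)$ and $r(s)$ the two monotone inverse branches ($f(\ell(s))=f(r(s))=s$, with $\ell(s)<\theta_c<r(s)$), so that $t_i^L=\ell(f_i)$ and $t_i^R=r(f_i)$. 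A change of variable to the height $s$ then yields
\[
t_2^L-t_1^L=\int_{f_1}^{f_2}\frac{ds}{f'(\ell(s))},\qquad t_1^R-t_2^R=\int_{f_1}^{f_2}\frac{ds}{|f'(r(s))|},
\]
using $f'(\ell)>0$ and $f'(r)<0$.

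Everything then reduces to the pointwise inequality $|f'(r(s))|<f'(\ell(s))$ for each height $s\in[f_1,f_2]$, equivalently $f'(\ell(s))+f'(r(s))>0$. This is exactly where the hypothesis $f'''>0$ of Lemma \ref{lema1}(iv) enters: it says $f'$ is strictly convex, so on $[\ell(s),r(s)]$ the graph of $f'$ lies strictly below the chord joining its endpoint values. Integrating the chord bound and using $\int_{\ell(s)}^{r(s)}f'(x)\,dx=f(r(s))-f(\ell(s))=0$ gives
\[
0=\int_{\ell(s)}^{r(s)}f'(x)\,dx<\frac{r(s)-\ell(s)}{2}\big(f'(\ell(s))+f'(r(s))\big),
\]
which is the desired $f'(\ell(s))+f'(r(s))>0$ (this is just the strict Hermite--Hadamard inequality for the convex function $f'$). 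Feeding it back into the two integral formulas proves $t_2^L-t_1^L<t_1^R-t_2^R$ in the principal case.

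It then remains to control which arc the right preimages occupy, and this is the only genuinely fussy point. Since $f_1<f_2<f(\theta_c)$ and $f$ decreases then increases on $(\theta_c,2\pi)$ with minimum $-f(\theta_c)$ at $2\pi-\theta_c$, the constraints $t_2^R<t_1^R$ and $f(t_2^R)>f(t_1^R)$ exclude every configuration except the principal one and the case where $t_1^R$ has slipped into the increasing arc $(2\pi-\theta_c,2\pi)$. In that remaining case I would let $\hat r_1\in(\theta_c,2\pi-\theta_c)$ be the decreasing-arc preimage of $f_1$ (which exists because $f_1>-f(\theta_c)$); then $\hat r_1<2\pi-\theta_c<t_1^R$, and applying the principal case to the admissible quadruple $t_1^L<t_2^L<\theta_c<t_2^R<\hat r_1$ gives $t_2^L+t_2^R<t_1^L+\hat r_1<t_1^L+t_1^R$, as required. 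I expect the main obstacle to be recognizing that convexity of $f'$ is precisely the chord (Hermite--Hadamard) estimate controlling the slope ratio $|f'(r)|/|f'(\ell)|$ at a common height; once that step is isolated, the change of variables and the arc case-analysis are routine.
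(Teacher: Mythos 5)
Your proof is correct, but note that it cannot be compared line-by-line with the paper's own argument, because the paper does not prove this lemma at all: it defers the proofs of both Lemma \ref{lema1} and Lemma \ref{lema2albouy} to Albouy and Fu \cite{albouy}. What you supply is a self-contained derivation from the properties recorded in Lemma \ref{lema1}, and the structure is sound: the symmetry $f'(\pi+x)=f'(\pi-x)$ obtained from item (ii) correctly pins down the monotonicity arcs $(0,\theta_c)$, $(\theta_c,2\pi-\theta_c)$, $(2\pi-\theta_c,2\pi)$; the change of variables to the height $s$ reduces everything to the pointwise inequality $f'(\ell(s))+f'(r(s))>0$; and the strict Hermite--Hadamard (chord) inequality for the strictly convex function $f'$ — this is exactly where $f'''>0$ enters — delivers that inequality from $\int_{\ell(s)}^{r(s)}f'(x)\,dx=f_{2}-f_{2}=0$ wait, $=s-s=0$. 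Your reduction of the case where $t_1^R$ lies in the increasing arc to the principal case via the auxiliary point $\hat r_1$ is also correct: $f(t_2^R)=f_2>f_1=f(\hat r_1)$ with both points on the decreasing arc forces $t_2^R<\hat r_1$, so the quadruple is admissible. The one configuration that escapes your stated dichotomy is the boundary case $t_1^R=2\pi-\theta_c$ exactly (which the hypotheses do allow, with $f_1=-f(\theta_c)$): there $f'(r(s))\to 0$ as $s\to f_1^{+}$, so your second integrand blows up at the endpoint. This is harmless — the improper integral still equals the finite length $t_1^R-t_2^R$ by monotone convergence, and the pointwise slope inequality holds for every $s\in(f_1,f_2]$, so $\ell(s)+r(s)$ is strictly decreasing there and tends to $t_1^L+t_1^R$ as $s\to f_1^{+}$, giving the strict conclusion — but you should state this patch, since as written your case analysis asserts exhaustiveness it does not quite have.
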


\begin{corollary}\label{corolarioalbouy}
Consider $0<t_1<\theta_c<t_2<2\pi.$ If $f(t_1)\geq f(t_2)$ then $t_1+t_2>2\theta_c>6\pi/5.$
\end{corollary}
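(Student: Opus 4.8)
The plan is to treat the two assertions separately. The bound $2\theta_c>6\pi/5$ is immediate from part (v) of Lemma \ref{lema1}, which gives $\theta_c>3\pi/5$; so all the content lies in proving $t_1+t_2>2\theta_c$. The core of the argument is a single reflection inequality: for every $s\in(0,\theta_c)$ one has $f(\theta_c+s)>f(\theta_c-s)$. To prove it I would write $f(\theta_c+s)-f(\theta_c-s)=\int_0^s\big(f'(\theta_c+u)+f'(\theta_c-u)\big)\,du$ and use that $f'''>0$ (Lemma \ref{lema1}, iv) makes $f'$ strictly convex; hence, since $\theta_c$ is the critical point (Lemma \ref{lema1}, v), Jensen's inequality gives $f'(\theta_c+u)+f'(\theta_c-u)>2f'(\theta_c)=0$ for each $u\in(0,s)$. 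The integrand being positive yields the claim, and strict convexity makes it strict. Informally this says that $f$ decays more slowly to the right of its maximum than it climbs to the left; it is the one-pair, infinitesimal analogue of the midpoint monotonicity recorded in Lemma \ref{lema2albouy}.

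Next I would set $s=\theta_c-t_1\in(0,\theta_c)$, so that $\theta_c-s=t_1$ and $\theta_c+s=2\theta_c-t_1$. Combining the reflection inequality with the hypothesis gives $f(\theta_c+s)>f(\theta_c-s)=f(t_1)\ge f(t_2)$, i.e. $f(t_2)<f(\theta_c+s)$. It remains to convert this value comparison into the position statement $t_2>\theta_c+s$, which is exactly $t_1+t_2>2\theta_c$. For this I would invoke that $f$ is strictly decreasing on $(\theta_c,2\pi-\theta_c)$: this follows from $f'<0$ on $(\theta_c,\pi)$ (Lemma \ref{lema1}, v) extended past $\pi$ through the symmetry $f'(\pi+x)=f'(\pi-x)$, obtained by differentiating Lemma \ref{lema1}(ii), so that $f$ attains its right-hand minimum at $2\pi-\theta_c$.

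The step that needs care, and which I expect to be the main obstacle, is controlling on which monotone branch $t_2$ sits, since $f$ turns upward again on $(2\pi-\theta_c,2\pi)$. The comparison point satisfies $\theta_c+s<2\theta_c$, so it stays on the decreasing branch as long as $2\theta_c<2\pi-\theta_c$, that is $\theta_c<2\pi/3$. I would pin this down from the explicit formula in Lemma \ref{lema1}(iii): a direct evaluation gives $f'(2\pi/3)<0$, whence $\theta_c<2\pi/3$. With this the proof splits into two cases. If $t_2\le 2\pi-\theta_c$, then both $t_2$ and $\theta_c+s$ lie on the decreasing branch, and $f(t_2)<f(\theta_c+s)$ forces $t_2>\theta_c+s$. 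If instead $t_2>2\pi-\theta_c$, then $t_2>2\pi-\theta_c>2\theta_c>2\theta_c-t_1$ directly, using $\theta_c<2\pi/3$ and $t_1>0$. In either case $t_1+t_2>2\theta_c$, which together with the opening remark completes the proof.
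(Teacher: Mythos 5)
Your proof is correct, but there is nothing in the paper to compare it against line by line: the paper does not prove Corollary \ref{corolarioalbouy} at all, quoting it (together with Lemmas \ref{lema1} and \ref{lema2albouy}) from Albouy and Fu \cite{albouy}, where it arises as the limiting case of the four-point Lemma \ref{lema2albouy} (pair sums $t^L+t^R$ at a common level decrease as the level rises and tend to $2\theta_c$ as the level tends to the maximum, so any pair strictly below the maximum has sum $>2\theta_c$). Your route is genuinely different and self-contained: strict convexity of $f'$ (from $f'''>0$, Lemma \ref{lema1}(iv)) together with $f'(\theta_c)=0$ yields the reflection inequality $f(\theta_c+s)>f(\theta_c-s)$, which is exactly the one-pair content of that limiting argument, proved without invoking Lemma \ref{lema2albouy}; you then convert the value comparison $f(t_2)\le f(t_1)<f(2\theta_c-t_1)$ into the position comparison $t_2>2\theta_c-t_1$ by locating monotone branches. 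The two details you flag are indeed the ones that matter and both check out: the evaluation $f'(2\pi/3)=-\tfrac12+\tfrac{5\sqrt{3}}{36}<0$ pins down $\theta_c<2\pi/3$, which guarantees that the comparison point $2\theta_c-t_1<2\theta_c$ stays on the decreasing branch $(\theta_c,\,2\pi-\theta_c)$ (here $f'<0$ follows from Lemma \ref{lema1}(v) and the symmetry $f'(\pi+x)=f'(\pi-x)$ obtained by differentiating Lemma \ref{lema1}(ii)); and the residual case $t_2>2\pi-\theta_c$ is disposed of by $2\pi-\theta_c>2\theta_c$, again from $\theta_c<2\pi/3$. A careless appeal to ``$f$ decreases past $\theta_c$'' would be wrong, since $f$ turns upward on $(2\pi-\theta_c,2\pi)$, so this case split is not optional. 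In short: your proof is complete and more explicit than the paper, whose only ``proof'' is the citation; what it costs is redoing work the paper outsources, and what it buys is a transparent, elementary argument that also illuminates why Lemma \ref{lema2albouy} holds (convexity of $f'$ is the engine behind both).
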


\begin{figure}
  \centering
  \includegraphics[width=3in]{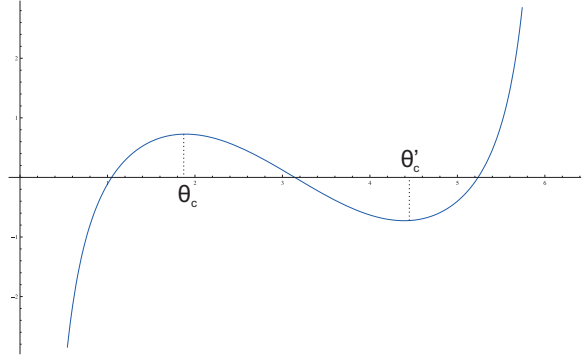}
  \caption{The function $\displaystyle f(x) = \sin(x)\left(1 - \frac{1}{8|\sin^3(x/2)|} \right)$.}
  \label{fig14}
\end{figure}

\section{Main Results}

We now consider the planar problem of $1+4$ bodies, where the four satellites do not necessarily have the same masses. The goal is to find all central configurations with two diametrically opposite satellites. We call them the collinear satellites. See Figure \ref{fig14}. In such way we have a central configuration of the planar $1+2$ body problem in which the satellites and the massive body are collinear. Hence we also get stacked central configurations as introduced by Hampton \cite{hampton}.

\begin{figure}[h]
  \centering
  \includegraphics[width=3in]{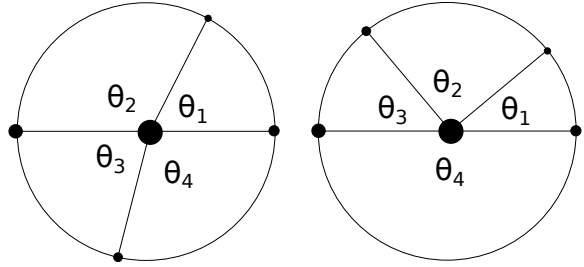}
  \caption{The $1+4$ body problem with two diametrically opposite satellites.}
  \label{fig14}
\end{figure} 
Since $f(x)=-f(2\pi-x)$, in the case $n=4$, the system (\ref{sistemageral}) becomes

\begin{eqnarray}
&&\mu_2f_1+\mu_3f_{12}=\mu_4f_4, \nonumber \\
&&\mu_3f_2+\mu_4f_{23}=\mu_1f_1, \nonumber \\
&&\mu_4f_3+\mu_1f_{34}=\mu_2f_2,\label{system14}\\
&&\mu_1f_4+\mu_2f_{14}=\mu_3f_3,\nonumber \\
&&\theta_1+\theta_2+\theta_3+\theta_4=2\pi,\nonumber
\end{eqnarray}
where $$f_i=f(\theta_i) \mbox{ and } f_{ij}=f(\theta_i+\theta_j).$$

Firstly we consider the case where the two collinear satellites are arranged consecutively in the circle, like the second case in Fig. \ref{fig14}. The next result show that it's impossible to have a central configuaration like that.

\begin{theorem}
 Let $(\theta_1,\theta_2,\theta_3,\theta_4)$ be a central configuration of the planar $1+4$ body problem associated to masses parameters $\mu_1,\mu_2,\mu_3,\mu_4$. Then any two consecutive satellites in conjuction with the massive body can not be collinear, i. e.  $\theta_i\neq \pi$ for all $i=1,2,3,4$.
\end{theorem}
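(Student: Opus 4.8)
The plan is to argue by contradiction and to exploit the cyclic structure of system (\ref{system14}). Because the four equations are cyclic permutations of one another and the mass parameters $\mu_1,\dots,\mu_4$ are arbitrary positive numbers, a cyclic relabelling of the satellites lets me assume without loss of generality that the collinear pair is $q_1,q_2$, i.e. that $\theta_1=\pi$. Then $f_1=f(\pi)=0$ by Lemma \ref{lema1}(i), and writing $a=\theta_2$, $b=\theta_3$, $c=\theta_4$ I have the constraint $a+b+c=\pi$ with $a,b,c\in(0,\pi)$; in particular $a+c<\pi$.

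Next I would rewrite every term of (\ref{system14}) as a value of $f$ on $(0,\pi)$. Lemma \ref{lema1}(ii), in the form $f(\pi+x)=-f(\pi-x)$, gives $f_{12}=-f(\pi-a)$ and $f_{14}=-f(\pi-c)$, while $a+b+c=\pi$ gives $a+b=\pi-c$ and $b+c=\pi-a$, so $f_{23}=f(\pi-c)$ and $f_{34}=f(\pi-a)$. Substituting $f_1=0$, the system collapses to
\begin{eqnarray*}
\mu_4 f(c) &=& -\mu_3 f(\pi-a),\\
\mu_4 f(\pi-c) &=& -\mu_3 f(a),\\
\mu_4 f(b)+\mu_1 f(\pi-a) &=& \mu_2 f(a),\\
\mu_1 f(c)-\mu_2 f(\pi-c) &=& \mu_3 f(b).
\end{eqnarray*}

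The decisive ingredient is the sign chart of $f$ on $(0,\pi)$, which follows from Lemma \ref{lema1}: since $f\to-\infty$ as $x\to0^+$, $f(\pi/3)=f(\pi)=0$, and $f$ increases on $(0,\theta_c)$ and decreases on $(\theta_c,\pi)$ with $\theta_c>3\pi/5$, one gets $f<0$ on $(0,\pi/3)$ and $f>0$ on $(\pi/3,\pi)$. The first two equations involve only the positive ratio $\mu_3/\mu_4$, so they force $\operatorname{sgn}f(c)=-\operatorname{sgn}f(\pi-a)$ and $\operatorname{sgn}f(\pi-c)=-\operatorname{sgn}f(a)$. I would then run through the admissible sign patterns and discard each inadmissible one, observing that every discarded case forces $a+c\geq\pi$ (either $a>2\pi/3,\,c>\pi/3$, or the degenerate equalities forcing $b=0$), contradicting $a+c<\pi$. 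The surviving pattern pins down $a<\pi/3$ and $c<\pi/3$, hence $b=\pi-a-c>\pi/3$. I expect this case-elimination to be the main, though modest, obstacle, since it is precisely where the constraint $b>0$ (equivalently $a+c<\pi$) must be invoked to kill the alternative sign combinations.

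With these localizations the contradiction is immediate: $a,c\in(0,\pi/3)$ give $f(a),f(c)<0$, whereas $b\in(\pi/3,\pi)$ and $\pi-a,\pi-c\in(2\pi/3,\pi)$ give $f(b),f(\pi-a),f(\pi-c)>0$. The third equation then asserts that a strictly positive quantity, $\mu_4 f(b)+\mu_1 f(\pi-a)$, equals the strictly negative quantity $\mu_2 f(a)$, which is impossible (the fourth equation yields the same clash). This rules out $\theta_1=\pi$, and the cyclic relabelling extends the conclusion to every $\theta_i$, completing the proof. Note that this argument needs only Lemma \ref{lema1}; the finer estimates of Lemma \ref{lema2albouy} and Corollary \ref{corolarioalbouy} are not required here.
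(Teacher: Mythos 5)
Your proposal is correct and takes essentially the same approach as the paper: a contradiction argument driven by the sign of $f$ (negative on $(0,\pi/3)$, positive on $(\pi/3,\pi)$), using the angle-sum constraint to force the two angles adjacent to the straight angle below $\pi/3$ and the middle angle above $\pi/3$, and then exhibiting one equation of the system whose two sides have strictly opposite signs. The only cosmetic difference is that you reflect all arguments into $(0,\pi)$ via $f(\pi+x)=-f(\pi-x)$ before doing the sign bookkeeping, whereas the paper works directly with the values $f(\pi+\theta_i)$ and first proves they are negative.
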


\begin{proof}
 Suppose without loss of generality $\theta_4=\theta_1+\theta_2+\theta_3=\pi$. The system (\ref{system14}) become
 
 \begin{eqnarray}
&&\mu_2f_1=\mu_3f(\pi+\theta_3), \label{t1eq1} \\
&&\mu_3f_2=\mu_4f(\pi+\theta_1)+\mu_1f_1, \label{t1eq2} \\
&&\mu_4f_3+\mu_1f_{34}=\mu_2f_2, \label{t1eq3}\\
&&\mu_3f_3=\mu_2f(\pi+\theta_1), \label{t1eq4}
\end{eqnarray}

Suppose that $f(\pi+\theta_1)\geq 0.$ Then $\pi+\theta_1\geq 5\pi/3$ and we get $\theta_1\geq 2\pi/3$ and $\theta_2+\theta_3\leq \pi/3.$ So $\theta_3<\pi/3$ and $f_3<0$ hence equation (\ref{t1eq4}) is impossible. Therefore $f(\pi+\theta_1)<0$. Analogously $f(\pi+\theta_3)<0.$ 

From (\ref{t1eq1}) and (\ref{t1eq4}), we get $f_1, f_3<0.$ So $\theta_1<\pi/3, \theta_3<\pi/3$ and consequently  $\theta_2>\pi/3$ and $f_2>0$. Hence the right side of (\ref{t1eq2}) is negative and its left side is positive. It's concludes the proof.   
\end{proof}

The remaining results concern the first case in Figure \ref{fig14} namely the collinear satellites are not consecutive in the circle. Firstly we prove that the configuaration is symmetric and the other satellites are identical.

\begin{theorem}\label{theorem1} Let $(\theta_1,\theta_2,\theta_3,\theta_4)$ be a central configuration of the planar $1+4$ body problem associated to masses parameters $\mu_1,\mu_2,\mu_3,\mu_4$. Suppose that the massive body and the satellites with masses $\mu_1$ and $\mu_3$ are collinear, i.e.  $\theta_1+\theta_2=\pi=\theta_3+\theta_4$. Then $\theta_1=\theta_4$ and $\theta_2=\theta_3$, i.e. the configuration is symmetric. Furthermore the other satellites have the same mass $\mu_2=\mu_4$.
\end{theorem}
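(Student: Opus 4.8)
The plan is to extract the essential constraints from the collinearity and reduce the whole statement to a monotonicity property of one auxiliary ratio. Writing $a=\theta_1$ and $b=\theta_3$, the hypotheses give $\theta_2=\pi-a$, $\theta_4=\pi-b$ with $a,b\in(0,\pi)$, and the constraint $\theta_1+\theta_2+\theta_3+\theta_4=2\pi$ is automatic. Since $f(\pi)=0$ we get $f_{12}=f_{34}=0$, and because $\theta_2+\theta_3=2\pi-(\theta_1+\theta_4)$ together with $f(x)=-f(2\pi-x)$ we get $f_{23}=-f_{14}$. Hence the first and third equations of (\ref{system14}) collapse to
\begin{eqnarray}
\mu_2 f(a)&=&\mu_4 f(\pi-b),\nonumber\\
\mu_4 f(b)&=&\mu_2 f(\pi-a),\nonumber
\end{eqnarray}
while the second and fourth equations only serve to determine $\mu_1,\mu_3$ and, as one checks, are automatically compatible; I will therefore base the argument on the two displayed identities alone.

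First I read off sign information. Since $\mu_2,\mu_4>0$, the two identities force $\operatorname{sign} f(a)=\operatorname{sign} f(\pi-b)$ and $\operatorname{sign} f(b)=\operatorname{sign} f(\pi-a)$. Recalling from Lemma \ref{lema1} that on $(0,\pi)$ one has $f<0$ on $(0,\pi/3)$, $f>0$ on $(\pi/3,\pi)$ and $f(\pi/3)=0$, this splits the analysis by the positions of $a,b$ relative to $\pi/3$ and $2\pi/3$. The cases where $f$ vanishes at one of the four arguments force directly $\{a,b\}=\{\pi/3,2\pi/3\}$, i.e. $a+b=\pi$. In the non-degenerate cases the sign conditions rule out $a,b$ lying on incompatible sides, leaving exactly: both $a,b\in(\pi/3,2\pi/3)$, or one of them in $(0,\pi/3)$ and the other in $(2\pi/3,\pi)$.

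The engine of the proof is the ratio $\rho(x)=f(\pi-x)/f(x)$. Dividing the two identities eliminates the masses and yields $f(\pi-a)f(\pi-b)=f(a)f(b)$, equivalently $\rho(a)\rho(b)=1$; and the elementary identity $\rho(\pi-x)=1/\rho(x)$ holds for all $x$. Thus $\rho(b)=1/\rho(a)=\rho(\pi-a)$, and the whole theorem reduces to $\rho$ being injective on the interval where the surviving cases place $b$ and $\pi-a$. This is the step I expect to be the crux. I would prove that $\rho$ is strictly monotone on $(\pi/3,2\pi/3)$ and on $(2\pi/3,\pi)$ by choosing, on each interval, the representation of $\rho$ that makes the sign of $\rho'$ transparent. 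On $(\pi/3,2\pi/3)$ I write $\rho(x)=h(\pi-x)/h(x)$ with $h(x)=1-\tfrac{1}{8}\sin^{-3}(x/2)$; here $h>0$ and $h'>0$ throughout (an elementary computation), so $\rho'<0$ immediately. On $(2\pi/3,\pi)$ I keep $\rho(x)=f(\pi-x)/f(x)$ and use $\operatorname{sign}\rho'=-\operatorname{sign}\big(f'(\pi-x)f(x)+f(\pi-x)f'(x)\big)$; by Lemma \ref{lema1}(v) $f$ increases on $(0,\theta_c)$ and decreases on $(\theta_c,\pi)$, and since a direct evaluation gives $f'(2\pi/3)<0$ we have $\theta_c<2\pi/3$, so $f'(\pi-x)>0$ and $f'(x)<0$ there, making all four factors combine to a definite sign and forcing $\rho'<0$.

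With strict monotonicity in hand, $\rho(b)=\rho(\pi-a)$ gives $b=\pi-a$ in the middle case (both arguments in $(\pi/3,2\pi/3)$) and in the outer case (then $b,\pi-a\in(2\pi/3,\pi)$, or symmetrically $a,\pi-b\in(2\pi/3,\pi)$). In every case $a+b=\pi$, i.e. $\theta_1=\theta_4$ and $\theta_2=\theta_3$, which is the asserted symmetry. Finally $f(\pi-b)=f(a)$, so the first identity reads $\mu_2 f(a)=\mu_4 f(a)$ with $f(a)\neq0$ (or, in the degenerate case $a=\pi/3$, the second identity with $f(2\pi/3)\neq0$), yielding $\mu_2=\mu_4$. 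The only genuinely delicate point is locating the critical value $\theta_c$ precisely enough ($\theta_c<2\pi/3$) to control the sign of $f'$ on $(2\pi/3,\pi)$; everything else is sign-chasing and the two short monotonicity computations.
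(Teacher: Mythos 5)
Your proof is correct, and it departs from the paper's at exactly the crucial step, in a way worth recording. Both arguments start the same: discard the two equations involving $\mu_1,\mu_3$ (legitimate, since you only need a subset of the necessary conditions -- your aside about their "automatic compatibility" is not needed), multiply the surviving relations $\mu_2 f(\theta_1)=\mu_4 f(\theta_4)$ and $\mu_4 f(\theta_3)=\mu_2 f(\theta_2)$ to eliminate the masses (you say "dividing", but the computation is the product), and reduce symmetry to injectivity of a ratio of values of $f$ at supplementary angles; the paper's $l(x)=f(x)/f(\pi-x)$ is your $1/\rho$. The difference is where injectivity is needed and how it is proved. The paper's sign analysis pairs $\theta_1$ with $\theta_4$, so it must allow both to lie in $(0,\pi/3)$, and therefore proves $l$ injective on all of $(0,2\pi/3)$; this is its hardest step, running through the derivative of the numerator $p(x)=f'(x)f(\pi-x)+f(x)f'(\pi-x)$, the convexity $f'''>0$, and Corollary \ref{corolarioalbouy}, to show $p$ attains a positive minimum at $\pi/2$. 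Your finer three-case analysis for $(a,b)$, together with the freedom to compare either $\rho(b)$ with $\rho(\pi-a)$ or $\rho(a)$ with $\rho(\pi-b)$, lets you avoid the interval $(0,\pi/3)$ entirely; and on $(\pi/3,2\pi/3)$ the sine cancellation $\rho(x)=h(\pi-x)/h(x)$, with $h(x)=1-\tfrac18\sin^{-3}(x/2)$ satisfying $h>0$ and $h'(x)=\tfrac{3}{16}\cos(x/2)\sin^{-4}(x/2)>0$, makes strict monotonicity immediate. So your route never invokes $f'''>0$, Lemma \ref{lema2albouy} or Corollary \ref{corolarioalbouy}, which is a genuine simplification. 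On $(2\pi/3,\pi)$ the two arguments coincide (sign chasing), and both need $\theta_c<2\pi/3$, which Lemma \ref{lema1}(v) (only $\theta_c>3\pi/5$) does not supply; you rightly flag that this requires the direct evaluation $f'(2\pi/3)=-\tfrac12+\tfrac{5}{12\sqrt3}<0$, a point the paper passes over silently. Finally, your derivation of $\mu_2=\mu_4$ (from the first identity, or from the second in the degenerate kite case $a=\pi/3$) is cleaner than the paper's closing argument, which invokes all four equations.
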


\begin{proof}
 Since $f(\pi)=0,$ we get
\begin{eqnarray}
\mu_2f_1&=&\mu_4f_4,\label{eq1}\\
\mu_3f_2+\mu_4f_{23}&=&\mu_1f_1,\label{eq2}\\
\mu_4f_3&=&\mu_2f_2,\label{eq3}\\
\mu_1f_1+\mu_2f_{14}&=&\mu_2f_3. \label{eq4}
\end{eqnarray}

Hence, by (\ref{eq1}) and (\ref{eq3})
\begin{equation}\label{eq5}
f_1f_3=f_2f_4.
\end{equation}

From (\ref{eq1}), $f_1=0$ if and only if $f_4=0$. As the only root in $(0,\pi)$ of $f$ is $\pi/3$, then if $f_1=0$ or $f_4=0$ we get $\theta_1=\theta_4=\pi/3$. Analogously if $f_2=0$ or $f_3=0$, then $\theta_2=\theta_3=\pi/3$.
Therefore we will suppose that $f_i\neq 0,i=1,2,3,4$ or equivalently $\theta_i\neq \pi/3,2\pi/3$. 

From (\ref{eq5}) and by hypothesis $\theta_2=\pi-\theta_1, \theta_3=\pi-\theta_4$ we have
\begin{equation}\label{eqlt1lt4}
l(\theta_1)=l(\theta_4),
\end{equation}
where $\displaystyle l(x)=\frac{f(x)}{f(\pi-x)}, x \neq 2\pi/3.$

Note that from (\ref{eq1}) and (\ref{eq3}), $f_1,f_4$ have the same sign as well $f_2,f_3$. Since $f$ is negative in $(0,\pi/3)$ and positive in $(\pi/3,\pi)$ it follows that $0<\theta_1,\theta_4<2\pi/3$ or $2\pi/3<\theta_1,\theta_4<\pi.$ So by (\ref{eqlt1lt4}) is sufficient to show that $l|_{(0,2\pi/3)}$ and $l|_{(2\pi/3,\pi)}$ are injective for obtain $\theta_1=\theta_4$.

We have
$$l'(x)=\frac{f'(x)f(\pi-x)+f(x)f'(\pi-x)}{(f(\pi-x))^2}$$

If $x\in (2\pi/3,\pi)$ then $f'(x)<0,f(\pi-x)<0,f(x)>0$ and $f'(\pi-x)>0$. So $l'(x)>0$ and hence $l|_{(2\pi/3,\pi)}$ is injective.

Let $p(x)=f'(x)f(\pi-x)+f(x)f'(\pi-x)$ be the numerator of $l'(x)$. 
$$p'(x)=f''(x)f(\pi-x)-f(x)f''(\pi-x)$$
If $0<x<\pi/2$ then $\pi/2<x<\pi-x$. Hence $f''(x)<f''(\pi-x)$ and by Corollary \ref{corolarioalbouy} we get $f(x)<f(\pi-x)$. 

As $f''(\pi-x)<0$ we have 
$$f(x)f''(\pi-x)>f(\pi-x)f''(\pi-x)>f(\pi-x)f''(x).$$ 
So $p'(x)<0$ in $(0,\pi/2)$. Since $p(x)=p(\pi-x),$ if $x\in (\pi/2,\pi)$ then $p'(x)>0$. 

Therefore $x=\pi/2$ is the minimum point of $p$ and consequently 
$$p(x)\geq p(\pi/2)=2f'(\pi/2)f(\pi/2)>0.$$
Thus $f'(x)>0$ and hence $\theta_1=\theta_4$ and $\theta_2=\theta_3.$ 

Now if $\mu_2\neq \mu_4,$ by (\ref{eq1}), (\ref{eq2}), (\ref{eq3}) and (\ref{eq4}) we get $f(\theta_1)=f(\theta_2)=f(2\theta_1)=f(2\theta_2)=0.$ But this is impossible because in $(0,2\pi)$ the roots of $f$ are $\pi/3,\pi$ and $5\pi/3$.

\end{proof}

In the next result we count the number of central configuration of this problem in the general case. By last Theorem the configuration are symmetric and two satellites have the same infinitasimal masses. Next Theorem also shows that additional equality of the some infinitesimal masses are equivalent to existence of special configuration as a square and a kite.  
\begin{theorem}\label{theorem2}
Let $(\theta_1,\theta_2,\theta_3,\theta_4)$ be a central configuration of the planar $1+4$ body problem associated to masses parameters $\mu_1,\mu_2,\mu_3,\mu_4$. Suppose that $\theta_1+\theta_2=\theta_3+\theta_4=\pi$. Then for all values of the masses parameters the number of classes of central configuration is one, two or three. Moreover the square $(\pi/2,\pi/2,\pi/2,\pi/2)$ is a central configuration if and only if $\mu_3=\mu_1$ and the kite $(2\pi/3,\pi/3,\pi/3,2\pi/3)$ is a central configuration if and only if $\mu_1=\mu_2$ 
\end{theorem}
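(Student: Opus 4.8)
The plan is to use Theorem~\ref{theorem1} to reduce the four equations of system~(\ref{system14}) to a single scalar equation in one angle, and then to count its roots by exploiting the sign of $f'''$ from Lemma~\ref{lema1}. By Theorem~\ref{theorem1} every central configuration of the required type has $\theta_1=\theta_4=:a$, $\theta_2=\theta_3=\pi-a$ with $a\in(0,\pi)$, and $\mu_4=\mu_2$. Substituting this into system~(\ref{system14}), and using $f(\pi)=0$ together with $f(x)=-f(2\pi-x)$, I expect the first and third equations to become tautologies (each reducing to $\mu_2=\mu_4$), while the second and fourth equations both collapse to the single relation
$$g(a):=\mu_1 f(a)+\mu_2 f(2a)-\mu_3 f(\pi-a)=0 .$$
Thus counting classes of central configurations is the same as counting zeros of $g$ on $(0,\pi)$; since $a$, $2a$ and $\pi-a$ all lie in $(0,2\pi)$, the function $g$ is analytic there and no singularity of $f$ is met.

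\emph{At least one root.} I would analyse the endpoints. As $a\to 0^+$ one has $f(a),f(2a)\to-\infty$ while $f(\pi-a)\to f(\pi)=0$, so $g(a)\to-\infty$; as $a\to\pi^-$ the singular behaviour of $f$ near $2\pi$ gives $f(2a)\to+\infty$ and $-f(\pi-a)\to+\infty$ while $f(a)\to 0$, so $g(a)\to+\infty$. Continuity and the intermediate value theorem then yield at least one central configuration.

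\emph{At most three roots (the crux).} Differentiating three times,
$$g'''(a)=\mu_1 f'''(a)+8\mu_2 f'''(2a)+\mu_3 f'''(\pi-a).$$
Since the masses are positive and $f'''>0$ on $(0,2\pi)$ by Lemma~\ref{lema1}(iv), we get $g'''>0$ on all of $(0,\pi)$, for every admissible choice of masses. A triple application of Rolle's theorem then forces $g''$ to have at most one zero, $g'$ at most two, and $g$ at most three. Combined with the previous step, the number of classes is one, two or three. I expect this to be the heart of the argument: the whole count follows from the single structural fact $f'''>0$, and the only care needed is to confirm the smoothness of $g$ and the boundary limits above.

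\emph{Square and kite.} Finally I would simply evaluate $g$ at the two distinguished shapes. At $a=\pi/2$ one has $f(2a)=f(\pi)=0$, so $g(\pi/2)=(\mu_1-\mu_3)f(\pi/2)$; since $f(\pi/2)=1-\tfrac{1}{2\sqrt2}\neq 0$, the square is a central configuration precisely when $\mu_1=\mu_3$. At $a=2\pi/3$ one has $f(\pi-a)=f(\pi/3)=0$ and $f(2a)=f(4\pi/3)=-f(2\pi/3)$, so $g(2\pi/3)=(\mu_1-\mu_2)f(2\pi/3)$; since $f(2\pi/3)\neq 0$, the kite is a central configuration precisely when $\mu_1=\mu_2$. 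These two evaluations give the stated equivalences and complete the proof.
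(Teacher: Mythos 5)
Your proposal is correct and follows essentially the same route as the paper: reduce to the single scalar equation via Theorem~\ref{theorem1} (your $g$ is the paper's $g$ multiplied by $\mu_1$, with $f(\pi+x)$ rewritten as $-f(\pi-x)$ using Lemma~\ref{lema1}(ii)), obtain at least one root from the boundary limits, at most three from $g'''>0$ and Rolle, and then evaluate $g$ at $\pi/2$ and $2\pi/3$ for the square and kite equivalences.
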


\begin{proof}

 By Theorem \ref{theorem1} we know that $\mu_2=\mu_4, \theta_1=\theta_4,\theta_2=\theta_3=\pi-\theta_1.$ So the equations (\ref{eq1}) and (\ref{eq3}) are redundant and (\ref{eq2}) and (\ref{eq4}) are equivalent to each other and they become
\begin{equation}
f(\theta_1)+\frac{\mu_2}{\mu_1}f(2\theta_1)+\frac{\mu_3}{\mu_1}f(\pi+\theta_1)=0.
\end{equation} 

We must to say how many roots in $(0,\pi)$ has the function
$$g(x)=f(x)+\frac{\mu_2}{\mu_1}f(2x)+\frac{\mu_3}{\mu_1}f(\pi+x).$$

It's easy to see that $g(x)\rightarrow -\infty$ if $x \rightarrow 0^{+}$ and $g(x)\rightarrow +\infty$ if $x \rightarrow \pi^{-}$. So for any $\mu_1,\mu_2,\mu_3>0$ there is at least one solution of $g(x)=0$ in $(0,\pi)$. Moreover, since $\displaystyle g'''(x)=f'''(x)+8\frac{\mu_2}{\mu_1}f'''(2x)+\frac{\mu_3}{\mu_1}f'''(\pi+x)>0$, there are at most 3 roots of $g$ in $(0,\pi)$.

Observe that, by Lemma \ref{lema1},  
\begin{eqnarray*}
g(\pi/2)&=&f(\pi/2)+\frac{\mu_2}{\mu_1}f(\pi)+\frac{\mu_3}{\mu_1}f(\pi+\pi/2)\\
&=&f(\pi/2)+\frac{\mu_3}{\mu_1}f(\pi+\pi/2)\\
&=&f(\pi/2)-\frac{\mu_3}{\mu_1}f(\pi-\pi/2)\\
&=&f(\pi/2)\left(1-\frac{\mu_3}{\mu_1}\right).
\end{eqnarray*}

Hence $g(\pi/2)=0$ if and only if $\mu_3=\mu_1$. 
Likewise 
\begin{eqnarray*}
g(2\pi/3)&=&f(2\pi/3)+\frac{\mu_2}{\mu_1}f(4\pi/3)+\frac{\mu_3}{\mu_1}f(5\pi/3)\\
&=&f(2\pi/3)-\frac{\mu_2}{\mu_1}f(2\pi/3)\\
&=&f(2\pi/3)\left(1-\frac{\mu_2}{\mu_1}\right).
\end{eqnarray*}

So $g(2\pi/3)=0$ if and only if $\mu_2=\mu_1$.

\end{proof}

The special case where the collinear satellites has the same mass is completely treated in the next Theorem. We have now two parameters of masses, $\mu_1=\mu_3$ and $\mu_2=\mu_4$. The ratio $\mu_2/\mu_1$ provides a parameter for bifurcation in the number of central configuration of this problem. We give it's exact value in next Theorem.

\begin{theorem}
Let $(\theta_1,\theta_2,\theta_3,\theta_4)$ be a central configuration of the planar $1+4$ body problem associated to masses parameters $\mu_1,\mu_2,\mu_3,\mu_4$. Assume that $\theta_1=\theta_4, \theta_2=\theta_3=\pi-\theta_1$. Suppose that $\mu_1=\mu_3$. If $\mu_2/\mu_1\leq\frac{3\sqrt{2}}{7}$ there is a unique central configuration: the square $\theta_i=\pi/2, i=1,2,3,4$. If $\mu_2/\mu_1>\frac{3\sqrt{2}}{7}$ there are two central configurations: the square and the kite $(\theta_1,\pi-\theta_1,\pi-\theta_1,\theta_1)$ where $\theta_1\in (\pi/6,\pi/2)$. In fact the function mapping $\mu_2/\mu_1\in (\frac{3\sqrt{2}}{7},+\infty)$ to $\theta_1\in (\pi/6,\pi/2)$ in the kite configuration is a bijective function.
\end{theorem}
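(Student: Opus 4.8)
The plan is to use the reduction already carried out in the proof of Theorem \ref{theorem2}. By Theorems \ref{theorem1} and \ref{theorem2}, under $\mu_1=\mu_3$ (so that $\mu_2=\mu_4$, $\theta_1=\theta_4$, $\theta_2=\theta_3=\pi-\theta_1$), and writing $k=\mu_2/\mu_1>0$, every admissible configuration corresponds to a root $\theta_1\in(0,\pi)$ of
\begin{equation*}
g(x)=f(x)+k\,f(2x)+f(\pi+x)=f(x)-f(\pi-x)+k\,f(2x),
\end{equation*}
where I used $f(\pi+x)=-f(\pi-x)$ from Lemma \ref{lema1}. First I would record the symmetry $g(\pi-x)=-g(x)$, which follows from $f(2\pi-2x)=-f(2x)$; in particular $g(\pi/2)=0$ always (the square), and the nonzero roots come in pairs $\{x,\pi-x\}$, each pair describing the single kite $(\theta_1,\pi-\theta_1,\pi-\theta_1,\theta_1)$. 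Hence the number of distinct central configurations equals $1$ plus the number of roots of $g$ in $(0,\pi/2)$.

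The core step is a sharp count of these roots via the substitution $G(t)=g(\pi/2+t)$ on $(-\pi/2,\pi/2)$, which is odd. Since $g'''>0$ on $(0,\pi)$ (exactly as in Theorem \ref{theorem2}, because each argument $x,2x,\pi+x$ lies in $(0,2\pi)$ and $f'''>0$ by Lemma \ref{lema1}), we have $G'''>0$. As $G$ is odd, $G''$ is odd and increasing with $G''(0)=0$, so $G'$ is even with a global minimum at $t=0$ equal to $G'(0)=g'(\pi/2)$. A direct computation using $f'(\pi/2)=\frac{3\sqrt2}{8}$ and $f'(\pi)=-7/8$ (Lemma \ref{lema1}) gives
\begin{equation*}
g'(\pi/2)=2f'(\pi/2)+2k\,f'(\pi)=\frac{3\sqrt2-7k}{4}.
\end{equation*}
If $k\le\frac{3\sqrt2}{7}$ then $G'\ge0$, so $G$ is increasing and $t=0$ is its only root: the square is the unique configuration. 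If $k>\frac{3\sqrt2}{7}$ then $G'(0)<0$; combined with $G(t)\to+\infty$ as $t\to(\pi/2)^-$ (because $g(x)\to+\infty$ as $x\to\pi^-$) and the fact that $G'$ increases from its negative minimum, $G$ decreases then increases on $(0,\pi/2)$ and hence has exactly one positive root. Thus $g$ has exactly one root in $(0,\pi/2)$ and we obtain precisely the square and one kite.

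To pin the kite inside $(\pi/6,\pi/2)$ I would evaluate $g$ at $\pi/6$: since $f(\pi/3)=0$ the $k$-term drops and $g(\pi/6)=f(\pi/6)-f(5\pi/6)<0$, because $f<0$ on $(0,\pi/3)$ and $f>0$ on $(\pi/3,\pi)$. As $g'(\pi/2)<0$ forces $g>0$ immediately to the left of $\pi/2$, the unique root in $(0,\pi/2)$ lies in $(\pi/6,\pi/2)$, so $\theta_1\in(\pi/6,\pi/2)$. For the bijection I would then apply the implicit function theorem to $g(x;k)=f(x)-f(\pi-x)+k\,f(2x)$ along the kite branch: the root $\theta_1(k)$ is simple with $g'(\theta_1(k))>0$ (using $g'(\pi-x)=g'(x)$ and the sign of $G'$ at the positive root), while $\partial g/\partial k=f(2\theta_1)>0$ since $2\theta_1\in(\pi/3,\pi)$. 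Therefore $\theta_1'(k)=-f(2\theta_1)/g'(\theta_1)<0$, so $\theta_1(k)$ is strictly decreasing and continuous. The boundary behaviour follows from $k=-\,[f(\theta_1)-f(\pi-\theta_1)]/f(2\theta_1)$: as $\theta_1\to(\pi/2)^-$ this is an indeterminate $0/0$ limit whose value is $\frac{3\sqrt2}{7}$ (consistent with the bifurcation value), and as $\theta_1\to(\pi/6)^+$ the denominator tends to $0^+$ with nonzero numerator, so $k\to+\infty$. Hence $k\mapsto\theta_1(k)$ is a strictly decreasing bijection of $(\frac{3\sqrt2}{7},+\infty)$ onto $(\pi/6,\pi/2)$.

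I expect the main obstacle to be the sharp root count, namely justifying that the odd function $G$ with $G'''>0$ has its entire root structure governed by the sign of $G'(0)$; once this convexity-plus-parity argument is in place, the remaining steps (the value of $g'(\pi/2)$, the sign of $g(\pi/6)$, and the implicit-function monotonicity) are routine.
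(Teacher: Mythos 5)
Your proposal is correct, and its counting/bifurcation half is essentially the paper's own argument: both rest on $g'''>0$, the antisymmetry $g(\pi-x)=-g(x)$ (your odd function $G(t)=g(\pi/2+t)$ is just this symmetry after a translation), the consequent minimality of $g'$ at $\pi/2$, and the value $g'(\pi/2)=\tfrac14(3\sqrt2-7k)$. Where you genuinely diverge is the bijectivity of $k\mapsto\theta_1(k)$. The paper proves injectivity by showing the explicit inverse $h(x)=\frac{-f(\pi+x)-f(x)}{f(2x)}$ is strictly decreasing: it studies $N(x)=(f(2x))^2h'(x)$, proves $N'>0$ using $f'''>0$, Corollary \ref{corolarioalbouy} and sign bookkeeping, and concludes $N<0$ from $N\to 0$ as $x\to\pi/2^-$ --- the most computational step of its whole proof. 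You bypass this entirely: the convexity-plus-parity structure you already set up shows the kite root is simple with $g'(\theta_1(k))>0$ (the root is where $G$ crosses zero increasing, and $g'(\pi-x)=g'(x)$ transfers this to $(0,\pi/2)$), so the implicit function theorem together with $\partial g/\partial k=f(2\theta_1)>0$ gives $\theta_1'(k)<0$; strict monotonicity comes for free rather than by computation, which is the main thing your route buys, while the paper's route yields the explicit monotone dependence of the mass ratio on the angle without invoking IFT. Both arguments then need the same endpoint limits (your $0/0$ limit $3\sqrt2/7$ at $\pi/2$ and the blow-up at $\pi/6$ are exactly the paper's L'H\^opital computation). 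The one place you should add a line is the final gluing: continuity and strict monotonicity make the image of $\theta_1$ an open subinterval of $(\pi/6,\pi/2)$, and you must use the relation $k=h(\theta_1(k))$ to see its endpoints are not interior points --- e.g.\ if $\theta_1(k)\to A>\pi/6$ as $k\to\infty$, then $k=h(\theta_1(k))\to h(A)<\infty$, a contradiction. This is routine, but it is what actually converts your two limits into surjectivity.
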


\begin{proof}
 Let $a=\mu_2/\mu_1.$ The central configurations are determinated by equation
 \begin{equation}\label{eq_theta1}
  f(\theta_1)+af(2\theta_1)+f(\pi+\theta_1)=0, 
 \end{equation}
with $\theta_1\in (0,\pi).$ Again, consider the function $g: (0,\pi)\rightarrow \mathbb{R}$ given by
$$g(x)=f(x)+af(2x)+f(\pi+x).$$

Since $g'''(x)>0,$ then $g''(x)=f''(x)+4af''(2x)+f''(\pi+x)$ is increasing. Moreover $g''(\pi/2)=0$ because $f''(\pi-x)=-f''(\pi+x)$, so we have that $x=\pi/2$ is the minimal value of $g'(x)$ in $(0,\pi)$.

We obtain
\begin{eqnarray}
 g'(\pi/2)&=&f'(\pi/2)+2af'(\pi)+f'(3\pi/2)\\
 &=&2f'(\pi/2)+2af'(\pi)\\
 &=&2\left(\frac{3\sqrt{2}}{8}-\frac{7a}{8}\right)\\
 &=&\frac{1}{4}(3\sqrt{2}-7a)
\end{eqnarray}

So if $a\leq \frac{3\sqrt{2}}{7}$ then $g'(x)\geq g'(\pi/2)\geq 0$ and consequently $g$ have only one root in $(0,\pi)$. As $x=\pi/2$ is always a root of $g$, then we have only the square $(\pi/2.\pi/2,\pi/2,\pi/2)$.

If $a> \frac{3\sqrt{2}}{7}$ then $g'(\pi/2)<0$, so $g$ has three roots in $(0,\pi)$, namely $\theta_1^*, \pi/2$ and $\theta_1^{**}$. Since $g(\pi-x)=-g(x)$ then $\theta_1^{**}=\pi-\theta_1^*$ and that roots correnpond to the same configuration, the kite $(\theta_1,\pi-\theta_1,\pi-\theta_1,\theta_1)$.

Now we will look for the kite central configuration $(\theta_1,\pi-\theta_1,\pi-\theta_1,\theta_1)$. $\theta_1$ agree with (\ref{eq_theta1}). By the symmetry we can consider $\theta_1 \in (0,\pi/2)$. If $\theta_1\neq \pi/6$ that equation is equivalently to
\begin{equation}\label{eq_a}
 a=\frac{-f(\pi+\theta_1)-f(\theta_1)}{f(2\theta_1)}.
\end{equation}

If $\theta_1<\pi/6$ the right side is negative and the left one is positive. Also observe that $\theta_1=\pi/6$ does not agree with (\ref{eq_theta1}). So we have no solution in $(0,\pi/6]$. Futhermore the right side of (\ref{eq_a}) satisfies
$$\lim_{\theta_1\rightarrow \pi/6^{+}}\frac{-f(\pi+\theta_1)-f(\theta_1)}{f(2\theta_1)}=+\infty$$
and
$$\lim_{\theta_1\rightarrow \pi/2^{-}}\frac{-f(\pi+\theta_1)-f(\theta_1)}{f(2\theta_1)}=\lim_{\theta_1\rightarrow \pi/2^{-}}\frac{-f'(\pi+\theta_1)-f'(\theta_1)}{2f'(2\theta_1)}=\frac{-f'(\pi+\pi/2)-f'(\pi/2)}{2f'(\pi)}=\frac{3\sqrt{2}}{7}.$$

Consider then the surjective function  $h:(\pi/6,\pi/2)\rightarrow (\frac{3\sqrt{2}}{7},+\infty)$ given by
$$h(x)=\frac{-f(\pi+x)-f(x)}{f(2x)}.$$

We claim that $h$ is one-to-one too. In fact it is a decreasing function. To see that we derivate $h$ and obtain
$$(f(2x))^2h'(x)=2f'(2x)(f(x)+f(\pi+x))-f(2x)(f'(x)+f'(\pi+x)).$$

The derivate of the right side of the above equation is given by
$$4f''(2x)(f(x)+f(\pi+x))-f(2x)(f''(x)+f''(\pi+x))=$$
$$4f''(2x)(f(x)-f(\pi-x))-f(2x)(f''(x)-f''(\pi-x)).$$

The equality follows by Lemma \ref{lema1}. We claim that the above expression is positive. In fact $x<\pi/2$, then $\pi-x>x.$ It follows that $f''(\pi-x)>f''(x)$ because $f'''(x)>0$. By Corollary \ref{corolarioalbouy} we have $f(\pi-x)>f(x)$. Moreover $f''(2x)<0$ and $f(2x)>0$ as $x\in (\pi/6,\pi/2).$ So the statement is true. This shows that the expression for $(f(2x))^2h'(x)$ takes its maximum value in limit case $x\rightarrow \pi/2$. But
$$\lim_{x\rightarrow \pi/2}(f(2x))^2h'(x)=\lim_{x\rightarrow \pi/2}(2f'(2x)(f(x)+f(\pi+x))-f(2x)(f'(x)+f'(\pi+x)))=0.$$

Therefore $(f(2x))^2h'(x)<0$ if $x\in (\pi/6,\pi/2)$ and thus $h$ is decreasing in the same interval. The Theorem follows.

\end{proof}

\section{Conclusions}

We studied the relative equilibria of the planar $1+4$ body problem in the case where two sattelites are diametrically opposite in the circle centered in the massive body. So these sattelites and the big mass are collinear. We show that all central configurations are symmetric kites and a square and the other two sattelites have the same mass. Moreover we prove that there are one, two or three such configurations and only in the case where the collinear satellites have different masses is possible to have three central configurations. If the collinear satellites have equal masses $\mu_1$ and the others satellites have masses $\mu_2$ we gave all relative equilibria and we calculated the value of ratio $\mu_2/\mu_1$ in which it provides the bifurcation from one to two central configurations.

The collinear configuration with two satellites diametrically opposite is a central configuration of  the planar $1+2$ body problem. So our aproach is a study of stacked central configurations too. In this way our results show that adding two new sattelites in a collinear $1+2$ configuration we get  new central configuration if and only if the two new satellites has the same masses, they are put symmetrically and the smaller angle between them and the line of the collinear satellites varies from $\pi/6$ to $\pi/2$. 



\medskip
\medskip

\end{document}